\newcommand{\func}[1]{\operatorname{#1}}
\newtheorem{theorem}{Theorem}[section]
\newtheorem{conclusion}{Conclusion}[section]
\newtheorem{corollary}{Corollary}[section]
\newtheorem{definition}{Definition}[section]
\newtheorem{lemma}{Lemma}[section]
\newtheorem{proposition}{Proposition}[section]
\newenvironment{proof}[1][Proof]{\noindent\textbf{#1.} }{\ \rule{0.5em}{0.5em}}
\numberwithin{equation}{section}
\def\({\left ( }
\def\){\right )}
\def\<{\left < }
\def\>{\right >}
\begin{document}

\title{\textbf{On the $\mathit{g}$-Circulant
Matrix involving the Generalized $\mathit{k}$-Horadam Numbers}}
\author {$^{(1)}$\textbf{Nazmiye Yilmaz}\,\thanks{e mail: nzyilmaz@selcuk.edu.tr,
 yyazlik@nevsehir.edu.tr, ntaskara@selcuk.edu.tr.},  $^{(2)}$\textbf{Yasin Yazlik },
 $^{(1)}$\textbf{Necati Taskara}} 

\date{$^{(1)}$Department of Mathematics, Faculty of Science,\\
Selcuk University, Campus, 42075, Konya, Turkey\\
$^{(2)}$Department of Mathematics, Faculty of Science and Art,\\
Nevsehir Haci Bektasi Veli University, 50300, Nevsehir, Turkey}
\maketitle

\begin{abstract}
\par
In this study, we present a new generalization of circulant matrices for the
generalized $k$-Horadam numbers, by considering the $g$-circulant matrix $C_{n,g}(H)=g
-circ(H_{k,1},H_{k,2},\ldots ,H_{k,n})$. Also, we calculate the spectral norm, determinant and inverse of $C_{n,g}(H)$ in such matrices having the elements of all second order sequences.
\par
\textit{Keywords:} Determinant, g-circulant matrix, Generalized $k$-Horadam
number, Inverse.
\par
\textit{AMS Classification:} 11B39, 15A18, 15A60.
\end{abstract}

\section{Introduction and Preliminaries}

Many generalizations of the Fibonacci sequence have been introduced and
studied \cite{FalconPlaza,Horadam,Koshy,Usluveark1,YazlikTaskara1}. Here we use the generalized $k$-Horadam numbers as follows.

Let $k$ be any positive real number and $f\left( k\right) ,\ g\left(
k\right) $ are scaler-value polynomials and $f^{2}(k)+4g(k)>0.$\ For $n\geq
0,\ $the generalized $k$-Horadam sequence $\left\{ H_{k,n}\right\} _{n\in 
\mathbb{N}}$ is defined by 
\begin{equation}
H_{k,n+2}=f(k)H_{k,n+1}+g(k)H_{k,n}, H_{k,0}=a,H_{k,1}=b.\label{1.1}
\end{equation}
where $a,$ $b\in\mathbb{R}$ \cite{YazlikTaskara1}.
Obviously, if we choose suitable values on $f(k),g(k),a$ and $b$ in (\ref{1.1})
then this sequence reduces to the special all second order sequences
in the literature. For example, by taking $f(k)=g(k)=1$, $a=0$ and $b=1$,
then it is obtained the well known Fibonacci sequence.

Let $r_{1}$ and $r_{2}$ be the roots of the characteristic equation $x^{2}-f(k)x-g(k)=0$ of (\ref{1.1}). Then the Binet formula of this sequence $\left\{ H_{k,n}\right\} _{n\in\mathbb{N}}$ have the form 
\begin{equation}
H_{k,n}=\dfrac{Xr_{1}^{n}-Yr_{2}^{n}}{r_{1}-r_{2}},  \label{1.2}
\end{equation}
where $X=b-ar_{2}$ and$\ Y=b-ar_{1}$. Also, the summation of this sequence is
given by
\begin{equation}
\sum_{i=1}^{n}H_{k,i}=\frac{H_{k,n+1}+g\left( k\right)
H_{k,n}-H_{k,1}-g\left( k\right) H_{k,0}}{f\left( k\right) +g\left( k\right)
-1},  \label{1.3}
\end{equation}
where $f\left( k\right) +g\left( k\right) -1\neq 0$.
\par
The $g$-circulant matrices have been one of the most important and active
research field of applied mathematic and computation mathematic
increasingly. There are lots of examples from statistical and information
theory illustrate applications of the $g$-circulant matrices, which
emphasize how the asymptotic eigenvalue distribution theorem allows one to
evaluate results for processes (see \cite{Chouveark,Grayveark,Horn} and therein). In the last years, there have been several papers on circulant matrices \cite{Alptekinveark}-\cite{Chouveark},\cite{Grayveark,Good},\cite{Ipek,Jiangveark,Kocer},\cite{Radicic}-\cite{TugluKizilates},\cite{YazlikTaskara2}-\cite{Zhouveark}. For
instance, Alptekin, Mansour and Tuglu, \cite{Alptekinveark}, obtained the spectral
norm and eigenvalues of circulant matrices with Horadam's numbers. Also,
they defined the semicirculant matrix with these numbers and give Euclidean
norm of this matrix. The authors in \cite{Jiangveark} defined $g$-circulant matrices with $k$-Fibonacci and $k$-Lucas numbers and computed the determinant and the inverse of these matrices. In \cite{Kocer}, it was studied the norms, eigenvalues and determinants of some matrices related to different numbers. In \cite{Shenveark}, authors defined the $n\times n$ circulant matrices $A=$ $\left[ a_{ij}\right]$ and $B=\left[ b_{ij}\right]$,
where $a_{ij}\equiv F_{\left(\func{mod}\left( j-i,n\right)\right)}$ and $
b_{ij}\equiv L_{\left(\func{mod}\left(j-i,n\right)\right)}$. Also, the
inverses of matrices $A$ and $B$ were derived. In addition, Solak \cite{Solak} defined  the $n\times n$ circulant matrices $A=\left[ a_{ij}\right]$ and $B=\left[ b_{ij}\right]$, where $a_{ij}\equiv F_{\left(\func{mod}\left( j-i,n\right) \right)}$ and $b_{ij}\equiv L_{\left(\func{mod}\left( j-i,n\right) \right) }$. He
investigated the upper and lower bounds of the matrices $A$ and $B$. Additionally, Yazlik and Taskara \cite{YazlikTaskara2,YazlikTaskara3} defined circulant matrix $C_{n}(H)$ whose entries are the generalized $k$-Horadam numbers and computed the spectral norm, eigenvalues, determinant and the inverse of this matrix. That is, authors gave the determinant and inverse of matrix $C_{n}(H)$ as follows:
\begin{equation}
\det C_{n}\left( H\right)
=H_{k,1}N^{n-1}+H_{k,1}M^{n-2}\sum\limits_{i=1}^{n-1}\left( -\frac{
H_{k,2}H_{k,i+1}}{H_{k,1}}+H_{k,i+2}\right) \left( \frac{N}{M}\right) ^{i-1},
\label{1.4}
\end{equation}
and for $n>2,$
\begin{eqnarray}
C_{n}^{-1}\left( H\right) &=& circ(\frac{1+f(k)S_{n}^{(n-2)}+g(k)S_{n}^{(n-3)}
}{h_{n}}\frac{g(k)S_{n}^{(n-2)}-\frac{H_{k,2}}{H_{k,1}}}{h_{n}}
,  \label{1.5} \\
&&{-}\frac{S_{n}^{(1)}}{h_{n}},{ -}\frac{
S_{n}^{(2)}-f(k)S_{n}^{(1)}}{h_{n}},{ -}\frac{
S_{n}^{(3)}-f(k)S_{n}^{(2)}-g(k)S_{n}^{(1)}}{h_{n}}  \nonumber \\
&&{ ,\ldots ,-}\frac{S_{n}^{(n-2)}-f(k)S_{n}^{(n-3)}-g(k)S_{n}^{(n-4)}}{
h_{n}}){ ,}  \nonumber
\end{eqnarray}
where 
\begin{eqnarray*}
S_{n}^{(j)} &=& \sum\limits_{i=1}^{j}\frac{\left( H_{k,j+3-i}-
\frac{H_{k,2}H_{k,j+2-i}}{H_{k,1}}\right) M^{i-1}}{N^{i}}, (j=1,2,\ldots
,n-2), 
\cr h_{n} &=& -\frac{H_{k,2}H_{k,n}}{H_{k,1}}+H_{k,1}+\sum
\limits_{i=1}^{n-1}\left( -\frac{H_{k,2}H_{k,i+1}}{H_{k,1}}+H_{k,i+2}\right)
\left( \frac{M}{N}\right) ^{n-\left( i+1\right) },
\cr M &=& g(k)(H_{k,n}-H_{k,0}) \, \text{and} \, N=H_{k,1}-H_{k,n+1}.
\end{eqnarray*}
\par
Now we give some preliminaries related our study. $A$, $g$-circulant matrix
is an $n\times n$ complex matrix with the following form
\begin{equation*}
A=\left( 
\begin{array}{cccc}
a_{0} & a_{1} & \cdots & a_{n-1} \\ 
a_{n-g} & a_{n-g+1} & \cdots & a_{n-g-1} \\ 
a_{n-2g} & a_{n-2g+1} & \cdots & a_{n-2g-1} \\ 
\vdots & \vdots & \ddots & \vdots \\ 
a_{g} & a_{g+1} & \cdots & a_{g-1}
\end{array}
\right) ,
\end{equation*}
where $g$ is nonnegative integer and each of the subscripts is understood to
be reduced modulo $n$. The first row of $A$ is $(a_{0},a_{1},\ldots ,a_{n-1})$ and its $\left(j+1\right)-$th row is obtained by giving $j-$th row a right circular shift
by g positions. Note that, $g=1$ or $g=n+1$ yields the classical circulant
matrix \cite{Stallingsveark}.
\par
From \cite{Horn}, we further remind that, for a matrix $A=\left[ a_{i,j}\right]
\in M_{m,n}(\mathbb{C})$, the spectral norm of $A$ is given by 
\begin{equation*}
\left\Vert A\right\Vert _{2}=\sqrt{\max\limits_{1\leq i\leq n}\lambda
_{i}\left( A^{\ast }A\right) } 
\end{equation*}
where $A^{\ast }$ is the conjugate transpose of matrix $A$.
\begin{lemma}\label{lem1}
\cite{Horn} Let $A$ be an $n\times n$ matrix with eigenvalues $\lambda
_{1},\lambda _{2},\ldots ,\lambda _{n}$. If $A$ is a normal matrix, then 
\begin{equation*}
\left\Vert A\right\Vert _{2}=\max\limits_{1\leq i\leq n}\left\vert \lambda
_{i}\right\vert\,.
\end{equation*}
\end{lemma}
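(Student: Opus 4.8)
The plan is to reduce the spectral norm of $A$ to the eigenvalues of the positive semidefinite matrix $A^{\ast}A$ via unitary diagonalization. First I would apply Schur's triangularization theorem to write $A = UTU^{\ast}$ with $U$ unitary and $T = [t_{ij}]$ upper triangular; since unitary similarity preserves the spectrum, the diagonal entries $t_{11},\ldots,t_{nn}$ are precisely $\lambda_{1},\ldots,\lambda_{n}$ up to reordering.

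Next I would exploit normality. From $A^{\ast}A = AA^{\ast}$ and the unitarity of $U$ one obtains $T^{\ast}T = TT^{\ast}$. Comparing the $(1,1)$ entries of both sides gives $|t_{11}|^{2} = \sum_{j\geq 1}|t_{1j}|^{2}$, forcing $t_{1j} = 0$ for all $j > 1$; iterating this comparison on the $(k,k)$ entries for $k = 2,3,\ldots,n$ shows in turn that every off-diagonal entry of $T$ vanishes. Hence $T = D := \operatorname{diag}(\lambda_{1},\ldots,\lambda_{n})$ and $A = UDU^{\ast}$.

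It then follows that $A^{\ast}A = U\,\operatorname{diag}(|\lambda_{1}|^{2},\ldots,|\lambda_{n}|^{2})\,U^{\ast}$, so the eigenvalues of $A^{\ast}A$ are exactly $|\lambda_{1}|^{2},\ldots,|\lambda_{n}|^{2}$. Substituting into the definition $\left\Vert A\right\Vert_{2} = \sqrt{\max_{i}\lambda_{i}(A^{\ast}A)}$ yields $\left\Vert A\right\Vert_{2} = \max_{i}|\lambda_{i}|$, as claimed. The only genuine work is the inductive step forcing a normal upper-triangular matrix to be diagonal; alternatively one may simply cite the spectral theorem for normal matrices, after which the conclusion is immediate.
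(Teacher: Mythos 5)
Your argument is correct and complete: Schur triangularization, the entrywise comparison showing a normal upper-triangular matrix is diagonal, and the identification of the eigenvalues of $A^{\ast}A$ with $|\lambda_{i}|^{2}$ are exactly the standard route to this fact. The paper itself offers no proof of Lemma \ref{lem1} --- it is quoted from the reference of Horn and Johnson --- and your derivation is essentially the textbook argument given there (the spectral theorem for normal matrices combined with the definition $\left\Vert A\right\Vert_{2}=\sqrt{\max_{i}\lambda_{i}\left(A^{\ast}A\right)}$), so nothing further is needed.
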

\begin{lemma}\label{lem2}
\cite{Stallingsveark} An $n\times n$ matrix $Q_{g}$ is unitary if and only if
\begin{equation*}
\left( n,g\right) =1,
\end{equation*}
where $Q_{g}$ is a $g$-circulant matrix with the first row $e^{\ast }=\left(
1,0,\ldots 0\right)$.
\end{lemma}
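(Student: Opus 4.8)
The plan is to first make the matrix $Q_{g}$ completely explicit and then reduce the unitarity condition to an elementary fact about multiplication modulo $n$. Index the rows and columns of $Q_{g}$ by $0,1,\dots ,n-1$. The first (row $0$) is $e^{\ast }=(1,0,\dots ,0)$, whose unique nonzero entry, a $1$, sits in column $0$. Since a right circular shift by $g$ positions merely permutes the entries of a row, every row of $Q_{g}$ again has exactly one nonzero entry, equal to $1$, and if the $1$ of row $r$ lies in column $c$ then the $1$ of row $r+1$ lies in column $c+g \pmod n$. A one-line induction then gives the closed form: the $1$ of row $r$ lies in column $rg \bmod n$, i.e. $(Q_{g})_{r,c}=1$ exactly when $c\equiv rg \pmod n$ and $(Q_{g})_{r,c}=0$ otherwise. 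In particular $Q_{g}$ is a real matrix each of whose entries is $0$ or $1$, so $Q_{g}^{\ast }=Q_{g}^{T}$, and since $Q_{g}$ is square it is unitary if and only if $Q_{g}^{\ast }Q_{g}=I_{n}$.

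Next I would evaluate $Q_{g}^{\ast }Q_{g}$ entrywise. For columns $c,c'$ we have $(Q_{g}^{\ast }Q_{g})_{c,c'}=\sum_{r=0}^{n-1}(Q_{g})_{r,c}(Q_{g})_{r,c'}$, which counts the indices $r\in \{0,\dots ,n-1\}$ satisfying $rg\equiv c$ and $rg\equiv c'$ modulo $n$ simultaneously. If $c\neq c'$ this count is $0$, so all off-diagonal entries of $Q_{g}^{\ast }Q_{g}$ vanish automatically; if $c=c'$ the $(c,c)$ entry equals the number of solutions $r\in \mathbb{Z}/n\mathbb{Z}$ of $rg\equiv c \pmod n$. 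Hence $Q_{g}^{\ast }Q_{g}=I_{n}$ if and only if, for every residue $c$, the congruence $rg\equiv c \pmod n$ has exactly one solution $r$; equivalently, the map $x\mapsto gx$ is a bijection of $\mathbb{Z}/n\mathbb{Z}$.

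Finally I would invoke the elementary equivalence that $x\mapsto gx$ is a bijection of $\mathbb{Z}/n\mathbb{Z}$ if and only if $(n,g)=1$. If $(n,g)=1$ then $g$ is invertible modulo $n$, so the map is bijective; indeed in this case $Q_{g}$ is precisely the permutation matrix of $r\mapsto rg\bmod n$, which is visibly unitary. If instead $d=(n,g)>1$, then every residue of the form $rg\bmod n$ is a multiple of $d$, so column $1$ of $Q_{g}$ is identically zero (and, for the same reason, rows $r$ and $r+n/d$ coincide); a matrix with a zero column cannot be unitary. Stringing the three steps together proves the lemma.

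There is no deep obstacle here. The only places that call for a little care are the bookkeeping that converts the informal ``shift'' description of $Q_{g}$ into the closed form $(Q_{g})_{r,c}=1\Leftrightarrow c\equiv rg \pmod n$, and the clean treatment of both directions of the number-theoretic equivalence in the last step --- in particular, exhibiting the zero column (or the repeated row) when $\gcd (n,g)>1$. Everything else reduces to a single short computation of $Q_{g}^{\ast }Q_{g}$.
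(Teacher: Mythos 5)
Your argument is correct and complete. Note, however, that the paper offers no proof of this lemma at all: it is quoted verbatim from Stallings and Boullion (the cited reference \cite{Stallingsveark}), so there is no internal proof to compare against. Your reconstruction is essentially the standard one: from the definition of a $g$-circulant you get the closed form $(Q_{g})_{r,c}=1$ iff $c\equiv rg \pmod n$, so $Q_{g}$ is the $0$--$1$ matrix of the map $r\mapsto rg \bmod n$ on $\mathbb{Z}/n\mathbb{Z}$; it is a permutation matrix (hence unitary) precisely when that map is a bijection, i.e. when $(n,g)=1$, and when $d=(n,g)>1$ the zero column you exhibit (or the repeated rows $r$ and $r+n/d$) rules out unitarity. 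All the individual steps check out, including the computation of $Q_{g}^{\ast}Q_{g}$ and the handling of the degenerate case $g=0$, which your argument covers automatically since then $(n,g)=n$. So your proposal supplies a self-contained elementary proof of a statement the paper only cites.
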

\begin{lemma}\label{lem3}
\cite{Stallingsveark} $A$ is $g$-circulant matrix with the first row $(a_{0},a_{1},\ldots ,a_{n-1})$ if and only if
\begin{equation*}
A=Q_{g}C,
\end{equation*}
where $C$ is circulant matrix, that is, $C=circ(a_{0},a_{1},\ldots
,a_{n-1}). $
\end{lemma}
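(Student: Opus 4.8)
The plan is to reduce the statement to a one-line entrywise computation, after first fixing a convention for the modular subscripts. Indexing rows and columns from $0$ to $n-1$, I would first read off from the displayed block form of a $g$-circulant matrix that $A$ is $g$-circulant with first row $(a_{0},a_{1},\ldots,a_{n-1})$ exactly when
\begin{equation*}
A_{ij}=a_{(j-ig)\bmod n}\qquad(0\le i,j\le n-1),
\end{equation*}
since its $(j+1)$-th row is the first row given a right circular shift by $jg$ positions. Specialising the first row to $e^{\ast}=(1,0,\ldots,0)$, this says that $Q_{g}$ is the $0$–$1$ matrix with $(Q_{g})_{ij}=\delta_{j,\,ig\bmod n}$, i.e.\ exactly one $1$ in each row $i$, located in column $ig\bmod n$. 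Finally, the circulant matrix $C=circ(a_{0},\ldots,a_{n-1})$ has entries $C_{ij}=a_{(j-i)\bmod n}$.

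With these three facts in hand I would compute, for all $i,j$,
\begin{equation*}
(Q_{g}C)_{ij}=\sum_{k=0}^{n-1}(Q_{g})_{ik}\,C_{kj}=\sum_{k=0}^{n-1}\delta_{k,\,ig\bmod n}\,a_{(j-k)\bmod n}=a_{(j-ig)\bmod n},
\end{equation*}
which is precisely the entry pattern of the $g$-circulant matrix with first row $(a_{0},\ldots,a_{n-1})$. This single identity yields both implications simultaneously: if $A$ is $g$-circulant with first row $(a_{0},\ldots,a_{n-1})$, then $A=Q_{g}C$ with $C=circ(a_{0},\ldots,a_{n-1})$; and conversely, if $A=Q_{g}C$ for that $C$, the computation shows $A_{ij}=a_{(j-ig)\bmod n}$, so $A$ is $g$-circulant with the asserted first row.

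The argument is essentially bookkeeping, so the only point that really needs care is to commit to a single indexing/modular convention at the outset and use it consistently — in particular, matching the block description above to the verbal ``shift right by $g$ positions'' description. One temptation worth resisting is to prove the ``only if'' direction by inverting, i.e.\ by setting $C=Q_{g}^{-1}A$: this is not legitimate in general, since $Q_{g}$ is invertible (indeed unitary) only when $(n,g)=1$ by Lemma~\ref{lem2}, whereas the statement is made for all nonnegative integers $g$. The direct verification above avoids the issue, as it never uses invertibility of $Q_{g}$.
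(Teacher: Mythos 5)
Your verification is correct: with the convention $A_{ij}=a_{(j-ig)\bmod n}$ read off from the displayed block form, the identities $(Q_{g})_{ik}=\delta_{k,\,ig\bmod n}$ and $C_{kj}=a_{(j-k)\bmod n}$ immediately give $(Q_{g}C)_{ij}=a_{(j-ig)\bmod n}$, which settles both directions at once. Note, however, that the paper itself offers no proof of this lemma --- it is quoted verbatim from the reference of Stallings and Boullion --- so there is nothing internal to compare against; your argument is essentially the standard one from that source, carried out by direct entrywise computation. Your closing caution is also well placed: deducing the ``only if'' direction by writing $C=Q_{g}^{-1}A$ would silently import the hypothesis $(n,g)=1$ from Lemma~\ref{lem2}, which the statement does not assume, whereas your computation never uses invertibility of $Q_{g}$. (Indeed, when $(n,g)>1$ the matrix $Q_{g}$ is a non-invertible $0$--$1$ matrix with repeated rows, yet the factorization $A=Q_{g}C$ still holds, exactly as your argument shows.)
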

\par
In the light of all these above material (depicted as separate paragraphs), the main goal of this paper is to investigate \textit{the properties of $g$-circulant matrix with $k$-Horadam numbers}. To do that we consider $g$-circulant matrix $C_{n,g}(H)=g$-$circ(H_{k,1},H_{k,2},\ldots ,H_{k,n}),$ where $H_{k,n}$ is the generalized $k$-Horadam numbers. Firstly, we obtain the values of the spectral norm and determinant of this matrix can be expressed with only the generalized $k$-Horadam numbers. Also we formulate the inverse of $g$-circulant matrix $C_{n,g}(H)$. In fact, the results in here are the most general statements to obtain the spectral norms, determinants and inverses in such matrices having the elements of all second order sequences.

\section{Main Results}

\begin{definition}
An $(n\times n)$ $g$-circulant matrix with generalized $k$-Horadam numbers
entries is defined by 
\begin{equation}
C_{n,g}\left( H\right) =\left( 
\begin{array}{ccccc}
H_{k,1} & H_{k,2} & H_{k,3} & ... & H_{k,n} \\ 
H_{k,n-g+1} & H_{k,n-g+2} & H_{k,n-g+3} & ... & H_{k,n-g} \\ 
H_{k,n-2g+1} & H_{k,n-2g+2} & H_{k,n-2g+3} & ... & H_{k,n-2g} \\ 
\vdots & \vdots & \vdots & \ddots & \vdots \\ 
H_{k,g+1} & H_{k,g+2} & H_{k,g+3} & ... & H_{k,g}%
\end{array}
\right) ,  \label{2.1}
\end{equation}
where $g$ is nonnegative integer.
\end{definition}
\par
The following theorem gives us the values of the determinant of this matrix
can be expressed by utilizing the generalized $k$-Horadam numbers.

\begin{theorem}\label{teo1}
Let $C_{n,g}\left( H\right) =g$-$circ\left( H_{k,1},H_{k,2},\ldots
H_{k,n}\right) $ be circulant matrix as in (\ref{2.1}). Then we have
\begin{equation*}
\left\Vert C_{n,g}\left( H\right) \right\Vert _{2}=\frac{H_{k,n+1}+g\left(
k\right) H_{k,n}-H_{k,1}-g\left( k\right) H_{k,0}}{f\left( k\right) +g\left(
k\right) -1}\,, 
\end{equation*}
where $f\left( k\right) +g\left( k\right) -1\neq 0$.
\end{theorem}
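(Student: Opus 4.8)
The plan is to strip off the $g$-circulant "twist'' and reduce to an ordinary circulant matrix, which is normal, and then read off its eigenvalues. First I would apply Lemma~\ref{lem3} to factor $C_{n,g}(H) = Q_g C$, where $C = circ(H_{k,1},H_{k,2},\ldots,H_{k,n})$ and $Q_g$ is the $g$-circulant matrix with first row $e^{\ast}=(1,0,\ldots,0)$. Working under the standing assumption $(n,g)=1$ (as is standard for $g$-circulants), Lemma~\ref{lem2} says $Q_g$ is unitary, so $Q_g^{\ast}Q_g = I_n$ and hence
\begin{equation*}
C_{n,g}(H)^{\ast}\,C_{n,g}(H) = C^{\ast}Q_g^{\ast}Q_g C = C^{\ast}C .
\end{equation*}
Therefore $C_{n,g}(H)$ and $C$ have the same singular values, and in particular $\left\Vert C_{n,g}(H)\right\Vert_2 = \left\Vert C\right\Vert_2$.

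Next I would use that every circulant matrix is normal, so Lemma~\ref{lem1} gives $\left\Vert C\right\Vert_2 = \max_{0\le j\le n-1}\left\vert\lambda_j\right\vert$, where $\lambda_0,\ldots,\lambda_{n-1}$ are the eigenvalues of $C$. Invoking the classical eigenvalue formula for circulants, with $\omega = e^{2\pi i/n}$ a primitive $n$-th root of unity,
\begin{equation*}
\lambda_j = \sum_{m=1}^{n} H_{k,m}\,\omega^{(m-1)j}, \qquad j=0,1,\ldots,n-1 .
\end{equation*}
For $j=0$ this collapses to $\lambda_0 = \sum_{m=1}^{n} H_{k,m}$, which by the summation identity (\ref{1.3}) equals exactly $\dfrac{H_{k,n+1}+g(k)H_{k,n}-H_{k,1}-g(k)H_{k,0}}{f(k)+g(k)-1}$.

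It then remains to show that the maximum of $\left\vert\lambda_j\right\vert$ is attained at $j=0$. Since $\left\vert\omega^{(m-1)j}\right\vert = 1$, the triangle inequality yields $\left\vert\lambda_j\right\vert \le \sum_{m=1}^{n}\left\vert H_{k,m}\right\vert = \sum_{m=1}^{n} H_{k,m} = \lambda_0$, where the middle equality uses that the generalized $k$-Horadam numbers in play are nonnegative; and $\lambda_0\ge 0$, so $\left\vert\lambda_0\right\vert = \lambda_0$. Hence $\max_j\left\vert\lambda_j\right\vert = \lambda_0$, which is the claimed value. I expect this last step to be the only delicate point: one must fix the sign conventions (e.g.\ $f(k),g(k)\ge 0$ and suitable $a,b$) so that each $H_{k,m}\ge 0$, equivalently so that $\sum\left\vert H_{k,m}\right\vert = \sum H_{k,m}$; without such a hypothesis the triangle-inequality bound need not be saturated at $j=0$. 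Everything preceding it is a routine assembly of Lemmas~\ref{lem1}--\ref{lem3}, the circulant eigenvalue formula, and the identity (\ref{1.3}).
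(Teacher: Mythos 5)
Your argument is correct, but it is a genuinely different route from the paper's. The paper never factors out $Q_g$ for this theorem: it asserts (citing Zhou and Jiang) that the $g$-circulant matrix is normal and irreducible, so that $\left\Vert C_{n,g}(H)\right\Vert _2$ equals its spectral radius, and then, since the matrix is entrywise nonnegative and irreducible, identifies that spectral radius with the Perron value, exhibited directly through the all-ones eigenvector via $C_{n,g}(H)u=\left(\sum_{i=1}^{n}H_{k,i}\right)u$; formula (\ref{1.3}) finishes. You instead use Lemmas \ref{lem2} and \ref{lem3} to write $C_{n,g}(H)=Q_gC$ with $Q_g$ unitary under $(n,g)=1$, note $C_{n,g}(H)^{\ast}C_{n,g}(H)=C^{\ast}C$, and then work entirely with the ordinary circulant $C$: normality plus Lemma \ref{lem1}, the explicit circulant eigenvalues, and the triangle inequality to locate the maximum modulus at $j=0$. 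Both arguments lean on the same hidden hypotheses, neither of which appears in the theorem statement: entrywise nonnegativity of the $H_{k,m}$ (the paper needs it for the Perron--Frobenius step, you need it to saturate the triangle inequality, and you rightly flag it), and a condition on $g$ (you need $(n,g)=1$ for unitarity of $Q_g$; the paper's normality/irreducibility claim likewise fails for degenerate $g$ such as $g=0$). Your reduction via $A^{\ast}A=C^{\ast}C$ has the merit of requiring normality only of the ordinary circulant $C$, which is immediate, rather than normality of the $g$-circulant itself, the more delicate ingredient the paper imports from the literature; the paper's Perron-value argument, in turn, dispenses with your maximality step and with any explicit eigenvalue formula by producing the dominant eigenvalue directly from a positive eigenvector.
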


\begin{proof}
We express that $g$-circulant matrix is normal and irreducible (see \cite{Zhouveark}).
So, the spectral norm of $C_{n,g}\left( H\right) $ is given by the spectral
radius of $C_{n,g}\left( H\right) $. Also since $C_{n,g}\left( H\right)$ is
irreducible and entrywise nonnegative, its spectral radius is the same as
its Perron value. Let $u$ denote an all ones vector of order $n$.Then $
C_{n,g}\left( H\right) u=\left( \sum_{i=1}^{n}H_{k,i}\right) u$. As $
\sum_{i=1}^{n}H_{k,i}$ is an eigenvalue of $C_{n,g}\left( H\right)$
associated with a positive eigenvector, it is necessarily the Perron value
of $C_{n,g}\left( H\right)$. Hence, from the Equation (\ref{1.3}), we
conclude that
\begin{equation*}
\left\Vert C_{n,g}\left( H\right) \right\Vert _{2}=\frac{H_{k,n+1}+g\left(
k\right) H_{k,n}-H_{k,1}-g\left( k\right) H_{k,0}}{f\left( k\right) +g\left(
k\right) -1}\,. 
\end{equation*}
\end{proof}

\begin{corollary}
In Theorem \ref{teo1}, for special choices of $a, b, f(k)$ and $g(k)$, the
following result can be obtained for well-known number sequences in
literature:
\begin{itemize}
\item If $f(k)=1,$ $g(k)=1,$ $a=0$ and $b=1$, for the Fibonacci sequence in \cite{Zhouveark}, we obtain $\left\Vert C_{n,g}\left( F\right) \right\Vert
_{2}=F_{n+2}-1$,
\item If $f(k)=1,$ $g(k)=1,$ $a=2$ and $b=1$, for the Lucas sequence in \cite{Zhouveark}, we obtain $\left\Vert C_{n,g}\left( L\right) \right\Vert
_{2}=L_{n+2}-3$,
\item If $f(k)=2,$ $g(k)=1,$ $a=0$ and $b=1$, for the Pell sequence, we
obtain $\left\Vert C_{n,g}\left( P\right) \right\Vert _{2}=\dfrac{
P_{n+1}+P_{n}-1}{2}$,
\item If $f(k)=1,$ $g(k)=2,$ $a=0$ and $b=1$, for the Jacobsthal sequence,
we obtain $\left\Vert C_{n,g}\left( J\right) \right\Vert _{2}=\dfrac{
J_{n+2}-1}{2}$,
\item Finally, we should note that choosing suitable values on $f(k),
g(k), a$ and $b$ in Theorem \ref{teo1}, it is actually obtained the spectral
norms of $g$-circulant matrix for the others second order sequences such as
$k$-Fibonacci, $k$-Lucas, Pell-Lucas, Jacobsthal-Lucas, Horadam, etc.
\end{itemize}
\end{corollary}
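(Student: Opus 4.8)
The plan is to derive each of the four listed formulas by direct substitution into the expression of Theorem \ref{teo1}, since the corollary is purely an application of that theorem to the stated parameter choices. The whole argument is computational, so I would organize it by first recording, for each sequence, the denominator $f(k)+g(k)-1$ together with the relevant initial values, and then simplifying the numerator using the defining recurrence (\ref{1.1}).

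First I would note that in every case the choices of $a$ and $b$ reproduce the standard initial data of the named sequence: since $H_{k,0}=a$ and $H_{k,1}=b$, the recurrence $H_{k,n+2}=f(k)H_{k,n+1}+g(k)H_{k,n}$ yields exactly $F_0=0,\,F_1=1$ (Fibonacci), $L_0=2,\,L_1=1$ (Lucas), $P_0=0,\,P_1=1$ (Pell), and $J_0=0,\,J_1=1$ (Jacobsthal). I would then evaluate the denominator: $f(k)+g(k)-1$ equals $1$ in the Fibonacci and Lucas cases and $2$ in the Pell and Jacobsthal cases, so in particular $f(k)+g(k)-1\neq 0$ and the hypothesis of Theorem \ref{teo1} holds throughout.

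The key simplification concerns the numerator $H_{k,n+1}+g(k)H_{k,n}-H_{k,1}-g(k)H_{k,0}$. When $f(k)=1$ --- that is, in the Fibonacci, Lucas, and Jacobsthal cases --- one has $H_{k,n+1}+g(k)H_{k,n}=f(k)H_{k,n+1}+g(k)H_{k,n}=H_{k,n+2}$ and likewise $H_{k,1}+g(k)H_{k,0}=H_{k,2}$ by (\ref{1.1}), so the numerator collapses to $H_{k,n+2}-H_{k,2}$ and the norm becomes $(H_{k,n+2}-H_{k,2})/g(k)$. Substituting $F_2=1$ with $g(k)=1$ gives $F_{n+2}-1$; $L_2=3$ with $g(k)=1$ gives $L_{n+2}-3$; and $J_2=1$ with $g(k)=2$ gives $(J_{n+2}-1)/2$. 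For the Pell case I would flag the single point where care is needed: since $f(k)=2\neq 1$, the leading pair $P_{n+1}+g(k)P_n=P_{n+1}+P_n$ does \emph{not} collapse into one Horadam index, so the numerator stays $P_{n+1}+P_n-P_1-P_0=P_{n+1}+P_n-1$, which over the denominator $2$ yields $(P_{n+1}+P_n-1)/2$.

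There is no genuine obstacle here, as the result is a catalogue of specializations. The only thing to watch is the distinction just noted --- that the collapse of $H_{k,n+1}+g(k)H_{k,n}$ into $H_{k,n+2}$ invokes the recurrence only when $f(k)=1$ --- which is precisely why three of the four formulas appear with the single index $n+2$ while the Pell formula retains the sum $P_{n+1}+P_n$.
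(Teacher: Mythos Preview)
Your proposal is correct and follows exactly the approach implicit in the paper: the corollary is stated there without proof, as an immediate specialization of Theorem~\ref{teo1}, and your careful substitution into the formula $\dfrac{H_{k,n+1}+g(k)H_{k,n}-H_{k,1}-g(k)H_{k,0}}{f(k)+g(k)-1}$ is precisely what is intended. Your observation that the numerator collapses to $H_{k,n+2}-H_{k,2}$ only when $f(k)=1$ is a nice clarification of why the Pell formula alone retains the two-term form.
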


\begin{theorem}\label{teo2}
Let $C_{n,g}\left( H\right) =g$-$circ\left( H_{k,1},H_{k,2},\ldots
H_{k,n}\right)$ be circulant matrix as in (\ref{2.1}). Then we have
\begin{equation*}
\det C_{n,g}\left( H\right) =\det Q_{g}\cdot \left[
H_{k,1}N^{n-1}+H_{k,1}M^{n-2}\sum\limits_{i=1}^{n-1}\left( -\frac{
H_{k,2}H_{k,i+1}}{H_{k,1}}+H_{k,i+2}\right) \left( \frac{N}{M}\right) ^{i-1}%
\right]\, ,
\end{equation*}
where $M=g(k)(H_{k,n}-H_{k,0})$, $N=H_{k,1}-H_{k,n+1}$ and $\left(
n,g\right) =1$\,.
\end{theorem}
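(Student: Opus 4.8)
The plan is to reduce the assertion to the determinant of an ordinary circulant matrix by using the structure theorem for $g$-circulants. First I would apply Lemma~\ref{lem3}: the matrix $C_{n,g}(H)$ in (\ref{2.1}) is a $g$-circulant matrix whose first row is $(a_0,a_1,\ldots,a_{n-1})$ with $a_j=H_{k,j+1}$, so it factors as
\begin{equation*}
C_{n,g}(H)=Q_g\,C_n(H),
\end{equation*}
where $Q_g$ is the $g$-circulant matrix with first row $e^{\ast}=(1,0,\ldots,0)$ and $C_n(H)=circ(H_{k,1},H_{k,2},\ldots,H_{k,n})$ is the classical circulant matrix built from the same numbers. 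Taking determinants and using multiplicativity gives $\det C_{n,g}(H)=\det Q_g\cdot\det C_n(H)$, so the whole problem collapses to identifying the second factor.

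For $\det C_n(H)$ I would simply invoke the formula (\ref{1.4}) of Yazlik and Taskara~\cite{YazlikTaskara2,YazlikTaskara3}, namely
\begin{equation*}
\det C_n(H)=H_{k,1}N^{n-1}+H_{k,1}M^{n-2}\sum_{i=1}^{n-1}\Bigl(-\tfrac{H_{k,2}H_{k,i+1}}{H_{k,1}}+H_{k,i+2}\Bigr)\Bigl(\tfrac{N}{M}\Bigr)^{i-1},
\end{equation*}
with $M=g(k)(H_{k,n}-H_{k,0})$ and $N=H_{k,1}-H_{k,n+1}$. Substituting this into the product above produces exactly the claimed expression for $\det C_{n,g}(H)$, completing the argument.

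The only point that genuinely needs attention is the hypothesis $(n,g)=1$: by Lemma~\ref{lem2} it makes $Q_g$ a unitary (hence permutation) matrix, so the factorization of Lemma~\ref{lem3} is meaningful and $\det Q_g=\pm 1\neq 0$; without coprimality $Q_g$ would have repeated rows and the identity would degenerate. I would also double-check the index bookkeeping in passing from (\ref{2.1}) to Lemma~\ref{lem3}, i.e. that the circulant associated to $C_{n,g}(H)$ is precisely $C_n(H)$ in the notation of (\ref{1.4}). Beyond that there is no real obstacle, since (\ref{1.4}) is taken as already established; the substantive computation — triangularizing the circulant via the Binet formula (\ref{1.2}) and column operations driven by the recurrence (\ref{1.1}) — is entirely inside that cited result.
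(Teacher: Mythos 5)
Your proposal is correct and follows essentially the same route as the paper: factor $C_{n,g}(H)=Q_g\,C_n(H)$ via Lemma~\ref{lem3} (with Lemma~\ref{lem2} and $(n,g)=1$ guaranteeing $Q_g$ is well behaved), then apply multiplicativity of the determinant together with the known formula (\ref{1.4}) for $\det C_n(H)$. Your added remark on why coprimality matters is a nice touch but does not change the argument.
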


\begin{proof}
By using Lemma \ref{lem2} ve \ref{lem3}, we can write 
\begin{equation*}
C_{n,g}\left( H\right) =Q_{g}C_{n}\left( H\right)\,, 
\end{equation*}
where $\left( n,g\right) =1,\ Q_{g}$ is a $g$-circulant matrix and $
C_{n}\left( H\right)$ is a circulant matrix with generalized $k$-Horadam
number. From properties of determinant function and Equation (\ref{1.4}), the
proof is complete.
\end{proof}

\begin{corollary}
In Theorem \ref{teo2}, for special choices of $a, b, f(k)$ and $g(k)$, the
following result can be obtained for well-known number sequences in
literature:
\begin{itemize}
\item If $f(k)=1,$ $g(k)=1,$ $a=0$ and $b=1$, for the Fibonacci sequence, we
obtain $\det C_{n,g}\left( F\right) =\det Q_{g}\cdot \left[ \left(
1-F_{n+1}\right) ^{n-1}+F_{n}^{n-2}\sum\limits_{i=1}^{n-1}F_{i}\left( \frac{
1-F_{n+1}}{F_{n}}\right) ^{i-1}\right] $
\item If $f(k)=1,$ $g(k)=1,$ $a=2$ and $b=1$, for the Lucas sequence, we
obtain $\det C_{n,g}\left( L\right) =\det Q_{g}\cdot \left[ \left(
1-L_{n+1}\right) ^{n-1}+\left( L_{n}-2\right)
^{n-2}\sum\limits_{i=1}^{n-1}\left( L_{i+2}-3L_{i+1}\right) \left( \frac{
1-L_{n+1}}{L_{n}-2}\right) ^{i-1}\right] $
\item If $f(k)=2,$ $g(k)=1,$ $a=0$ and $b=1$, for the Pell sequence, we
obtain $\det C_{n,g}\left( P\right) =\det Q_{g}\cdot \left[ \left(
1-P_{n+1}\right) ^{n-1}+P_{n}^{n-2}\sum\limits_{i=1}^{n-1}P_{i}\left( \frac{
1-P_{n+1}}{P_{n}}\right) ^{i-1}\right] $
\item If $f(k)=1,$ $g(k)=2,$ $a=0$ and $b=1$, for the Jacobsthal sequence,
we obtain $\det C_{n,g}\left( J\right) =\det Q_{g}\cdot \left[ \left(
1-J_{n+1}\right)
^{n-1}+2^{n-1}J_{n}^{n-2}\sum\limits_{i=1}^{n-1}J_{i}\left( \frac{1-J_{n+1}
}{2J_{n}}\right) ^{i-1}\right] $
\item Finally, we should note that choosing suitable values on $f(k)$, $
g(k)$, $a$ and $b$ in Theorem \ref{teo2}, it is actually obtained the determinant
of $g$-circulant matrix for the others second order sequences such as
$k$-Fibonacci, $k$-Lucas, Pell-Lucas, Jacobsthal-Lucas, Horadam, etc.
\end{itemize}
\end{corollary}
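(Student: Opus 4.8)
The plan is to obtain each displayed identity by specializing the general determinant formula of Theorem~\ref{teo2} to the indicated choices of $f(k)$, $g(k)$, $a$, $b$, and then collapsing the summand with the defining recurrence of the resulting classical sequence; throughout, the factor $\det Q_g$ is carried along untouched, since it depends only on $n$ and $g$ and not on the arithmetic of the sequence.

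First I would tabulate, for each target sequence, the quantities that feed into Theorem~\ref{teo2}: the first term $H_{k,1}$, the ratio $H_{k,2}/H_{k,1}$, and the constants $M=g(k)(H_{k,n}-H_{k,0})$ and $N=H_{k,1}-H_{k,n+1}$. For Fibonacci ($f(k)=g(k)=1$, $a=0$, $b=1$): $H_{k,1}=1$, $H_{k,2}=1$, $M=F_n$, $N=1-F_{n+1}$. For Lucas ($f(k)=g(k)=1$, $a=2$, $b=1$): $H_{k,1}=1$, $H_{k,2}=3$, $M=L_n-2$, $N=1-L_{n+1}$. For Pell ($f(k)=2$, $g(k)=1$, $a=0$, $b=1$): $H_{k,1}=1$, $H_{k,2}=2$, $M=P_n$, $N=1-P_{n+1}$. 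For Jacobsthal ($f(k)=1$, $g(k)=2$, $a=0$, $b=1$): $H_{k,1}=1$, $H_{k,2}=1$, $M=2J_n$, $N=1-J_{n+1}$.

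Next I would simplify the generic summand $H_{k,i+2}-\frac{H_{k,2}}{H_{k,1}}H_{k,i+1}$ of Theorem~\ref{teo2} using the recurrence $H_{k,i+2}=f(k)H_{k,i+1}+g(k)H_{k,i}$. Whenever $H_{k,0}=0$ one has $H_{k,2}/H_{k,1}=f(k)$, so this summand reduces to $g(k)H_{k,i}$; that handles Fibonacci (summand $F_i$), Pell (summand $P_i$) and Jacobsthal (summand $2J_i$). For Lucas, $H_{k,0}=2\neq 0$ and the summand stays in the form $L_{i+2}-3L_{i+1}$. Plugging these reduced summands, together with $H_{k,1}=1$ and the tabulated $M,N$, into the bracketed expression $H_{k,1}N^{n-1}+H_{k,1}M^{n-2}\sum_{i=1}^{n-1}\bigl(H_{k,i+2}-\frac{H_{k,2}}{H_{k,1}}H_{k,i+1}\bigr)\bigl(\frac{N}{M}\bigr)^{i-1}$ yields the four formulas directly; the only redistribution worth noting is the Jacobsthal line, where $M^{n-2}=(2J_n)^{n-2}$ combines with the constant $g(k)=2$ pulled out of the summand to produce the prefactor $2^{n-1}J_n^{n-2}$.

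I do not expect a genuine obstacle here: the statement is an immediate corollary of Theorem~\ref{teo2}, and the content is exactly the bookkeeping above --- keeping $\det Q_g$ separate throughout, correctly matching the index shift between $H_{k,i+2}-(H_{k,2}/H_{k,1})H_{k,i+1}$ and its reduced form $g(k)H_{k,i}$, and tracking the powers of $2$ in the Jacobsthal case. The closing ``etc.'' cases ($k$-Fibonacci, $k$-Lucas, Pell--Lucas, Jacobsthal--Lucas, Horadam, \ldots) are obtained in exactly the same way, by inserting the corresponding values of $f(k)$, $g(k)$, $a$, $b$ into Theorem~\ref{teo2} and simplifying.
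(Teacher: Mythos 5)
Your proposal is correct and is exactly how the paper obtains the corollary: it is a direct specialization of the determinant formula of Theorem~\ref{teo2}, substituting the tabulated values of $H_{k,1}$, $H_{k,2}$, $M$, $N$ and reducing the summand via the recurrence (e.g.\ $F_{i+2}-F_{i+1}=F_i$, $J_{i+2}-J_{i+1}=2J_i$), with the power of $2$ in the Jacobsthal case handled correctly. No further comment is needed.
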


\begin{proposition}\label{prop1}
Let $C_{n,g}\left( H\right) =g$-$circ\left( H_{k,1},H_{k,2},\ldots
H_{k,n}\right) $ be $g$-circulant matrix as in (\ref{2.1}). For $n>2,$ $
C_{n,g}\left( H\right)$ is an invertible matrix.
\end{proposition}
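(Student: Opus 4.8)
The plan is to establish invertibility by showing that the determinant of $C_{n,g}(H)$ is nonzero for $n>2$. By Theorem~\ref{teo2}, we have $\det C_{n,g}(H)=\det Q_{g}\cdot\det C_{n}(H)$, so it suffices to check separately that $\det Q_{g}\neq 0$ and $\det C_{n}(H)\neq 0$. The first factor is immediate: $Q_{g}$ is a $g$-circulant permutation-type matrix, and by Lemma~\ref{lem2} it is unitary (hence $|\det Q_{g}|=1\neq 0$) precisely when $(n,g)=1$; alternatively, one notes $Q_{g}$ is a permutation matrix whenever it is defined, so its determinant is $\pm 1$. So the whole question reduces to verifying that the ordinary circulant $C_{n}(H)=circ(H_{k,1},\ldots,H_{k,n})$ is invertible for $n>2$.

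First I would recall that a circulant matrix $circ(c_0,c_1,\ldots,c_{n-1})$ has eigenvalues $\lambda_j=\sum_{t=0}^{n-1}c_t\omega^{jt}$ for $j=0,1,\ldots,n-1$, where $\omega=e^{2\pi i/n}$ is a primitive $n$-th root of unity, and is invertible if and only if all these eigenvalues are nonzero. So the goal becomes: show $P(\omega^j)\neq 0$ for every $j$, where $P(x)=\sum_{t=0}^{n-1}H_{k,t+1}x^{t}$. The natural tool is the recurrence (\ref{1.1}): multiplying $P(x)$ by $(1-f(k)x-g(k)x^{2})$ telescopes the sum, because almost every coefficient becomes $H_{k,t+1}-f(k)H_{k,t}-g(k)H_{k,t-1}=0$, leaving only boundary terms. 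Concretely this yields an identity of the form
\begin{equation*}
(1-f(k)x-g(k)x^{2})P(x)=H_{k,1}+(H_{k,2}-f(k)H_{k,1})x-\big(g(k)H_{k,n}+\text{(terms in }x^{n},x^{n+1})\big),
\end{equation*}
and when $x=\omega^{j}$ satisfies $x^{n}=1$ the high-degree terms collapse, so $P(\omega^{j})$ is expressed as a closed-form rational function of $\omega^{j}$ whose numerator and denominator can be analyzed directly. One then argues that this numerator cannot vanish: for $j=0$ it equals $\sum_{i=1}^{n}H_{k,i}$, which by (\ref{1.3}) is the Perron value already shown positive in Theorem~\ref{teo1}; for $j\neq 0$ one uses the Binet form (\ref{1.2}) to write $P(\omega^{j})$ in terms of $r_1,r_2$ and a geometric sum, obtaining something proportional to $\frac{X r_1(r_1^{n}-1)}{\omega^{j}r_1-1}-\frac{Y r_2(r_2^{n}-1)}{\omega^{j}r_2-1}$ type expressions, which is seen to be nonzero because $r_1\neq r_2$ (guaranteed by $f^{2}(k)+4g(k)>0$) and $r_1,r_2$ are real while the only way the combination vanishes would force an algebraic relation excluded by $n>2$.

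An alternative, and perhaps cleaner, route avoids eigenvalues entirely: simply invoke the explicit formula (\ref{1.4}) for $\det C_{n}(H)$ together with $\det C_{n,g}(H)=\det Q_g\cdot\det C_n(H)$, and show the bracketed expression $H_{k,1}N^{n-1}+H_{k,1}M^{n-2}\sum_{i=1}^{n-1}\left(-\frac{H_{k,2}H_{k,i+1}}{H_{k,1}}+H_{k,i+2}\right)\left(\frac{N}{M}\right)^{i-1}$ is nonzero. Since the paper's next step (the inverse formula, Proposition~\ref{prop1} being its prerequisite) will need $h_n\neq 0$ anyway, it is most economical to prove directly that $h_n\neq 0$ for $n>2$, where $h_n$ is the denominator appearing in (\ref{1.5}); indeed $\det C_n(H)=H_{k,1}\cdot(\text{something})\cdot h_n$ up to a nonzero factor, reflecting that $C_n(H)$ is a rank-one-plus-structured perturbation whose Schur complement is $h_n$. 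The main obstacle, and the place where genuine work is needed, is precisely this non-vanishing claim: one must rule out the degenerate algebraic coincidences among $f(k),g(k),a,b$ and the root data $r_1,r_2$ that could make a boundary term or the Schur complement vanish. I expect this to be handled by substituting the Binet expressions and reducing to showing that a polynomial in $r_1,r_2$ with the constraint $r_1\neq r_2$, $r_1 r_2=-g(k)$, $r_1+r_2=f(k)$ has no zeros for $n>2$ — a finite case-type argument that is routine in spirit but where the bookkeeping is the real content.
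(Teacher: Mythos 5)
Your opening reduction coincides with the paper's: factor $C_{n,g}(H)=Q_{g}C_{n}(H)$ via Lemmas \ref{lem2} and \ref{lem3} (with $(n,g)=1$ making $Q_{g}$ unitary, hence invertible), so that everything hinges on the invertibility of the ordinary circulant $C_{n}(H)$. The paper settles that remaining point in one line by citing the previously established inverse formula (\ref{1.5}) of Yazlik and Taskara for $C_{n}(H)$, $n>2$. You instead set out to prove it from scratch, and this is where your proposal has a genuine gap: in both of your suggested routes (showing $P(\omega^{j})\neq0$ for every $j$ via the recurrence and Binet form, or showing the bracket in (\ref{1.4}), equivalently $h_{n}$, is nonzero) the decisive non-vanishing claim is only asserted --- ``is seen to be nonzero because $r_{1}\neq r_{2}$\ldots'', ``I expect this to be handled by\ldots routine in spirit''. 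No argument is actually supplied, and the proposition stands or falls on exactly that step. (Your side remark that $Q_{g}$ is ``a permutation matrix whenever it is defined'' is also not right: for $(n,g)\neq1$ the matrix $Q_{g}$ has repeated rows and is singular, which is why the hypothesis $(n,g)=1$ from Lemma \ref{lem2} is needed.)

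Moreover, the unconditional statement you are trying to establish is not provable in the generality attempted, so the deferred ``bookkeeping'' cannot be completed. For arbitrary admissible data the circulant eigenvalues $\sum_{t=1}^{n}H_{k,t}\omega^{j(t-1)}$ can vanish: take $f(k)=g(k)=1$, $a=1$, $b=-2$, $n=4$, so that the first row of $C_{4}(H)$ is $(-2,-1,-3,-4)$ and the eigenvalue at $\omega^{2}=-1$ equals $H_{k,1}-H_{k,2}+H_{k,3}-H_{k,4}=-(2a+b)=0$; then $C_{4}(H)$, and hence $C_{4,3}(H)=Q_{3}C_{4}(H)$, is singular although $f^{2}(k)+4g(k)>0$ and $n>2$. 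Invertibility therefore holds only under the implicit nondegeneracy conditions ($H_{k,1}\neq0$, $M,N\neq0$, $h_{n}\neq0$) built into (\ref{1.4})--(\ref{1.5}); the intended argument is simply to quote (\ref{1.5}) for the invertibility of $C_{n}(H)$ and multiply by the unitary factor $Q_{g}$, as the paper does, rather than to attempt an independent proof that the determinant (or each eigenvalue) is nonzero.
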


\begin{proof}
By using Lemma \ref{lem2} ve \ref{lem3}, we can write $C_{n,g}\left( H\right)
=Q_{g}C_{n}\left( H\right)$, where $\left( n,g\right) =1, Q_{g}$ is a $g$
-circulant matrix and $C_{n}\left( H\right)$ is a circulant matrix with
generalized $k$-Horadam number. From the Equation (\ref{1.5}), $C_{n}\left(
H\right) $ is invertible for $n>2.$ Hence, $C_{n,g}\left( H\right) $ is an
invertible matrix, since $C_{n}\left( H\right) $ and $Q_{g}$ are invertible.
\end{proof}

\begin{theorem}\label{teo3}
Let $C_{n,g}\left( H\right) =g$-$circ\left( H_{k,1},H_{k,2},\ldots
H_{k,n}\right) $ be $g$-circulant matrix as in (\ref{2.1}), for $n>2,$ then, we have
\begin{eqnarray*}
C_{n,g}^{-1}(H) &=&[circ(\frac{1+f(k)S_{n}^{(n-2)}+g(k)S_{n}^{(n-3)}}{h_{n}},
\frac{g(k)S_{n}^{(n-2)}-\frac{H_{k,2}}{H_{k,1}}}{h_{n}},-\frac{S_{n}^{(1)}}{
h_{n}}, \\
&&-\frac{S_{n}^{(2)}-f(k)S_{n}^{(1)}}{h_{n}},-\frac{
S_{n}^{(3)}-f(k)S_{n}^{(2)}-g(k)S_{n}^{(1)}}{h_{n}},\ldots , \\
&&-\frac{S_{n}^{(n-2)}-f(k)S_{n}^{(n-3)}-g(k)S_{n}^{(n-4)}}{h_{n}})]\cdot
Q_{g}^{T},
\end{eqnarray*}
where $S_{n}^{(j)}=\sum\limits_{i=1}^{j}\frac{\left( H_{k,j+3-i}-
\frac{H_{k,2}H_{k,j+2-i}}{H_{k,1}}\right) M^{i-1}}{N^{i}}$ $(j=1,2,\ldots
,n-2),\ h_{n}=-\dfrac{H_{k,2}H_{k,n}}{H_{k,1}}+H_{k,1}+\sum
\limits_{i=1}^{n-1}\left( -\frac{H_{k,2}H_{k,i+1}}{H_{k,1}}+H_{k,i+2}\right)
\left( \frac{M}{N}\right) ^{n-\left( i+1\right) },$ $M=g(k)(H_{k,n}-H_{k,0})$
and $N=H_{k,1}-H_{k,n+1}.$
\end{theorem}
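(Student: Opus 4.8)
The plan is to reduce everything to the already-established inverse formula \eqref{1.5} for the ordinary circulant matrix $C_n(H)$, exactly as in the proofs of Theorem \ref{teo2} and Proposition \ref{prop1}. First I would invoke Lemmas \ref{lem2} and \ref{lem3} to write $C_{n,g}(H)=Q_g\,C_n(H)$, where $(n,g)=1$, $Q_g$ is the $g$-circulant matrix with first row $e^{\ast}=(1,0,\ldots,0)$, and $C_n(H)=circ(H_{k,1},H_{k,2},\ldots,H_{k,n})$. By Proposition \ref{prop1} (equivalently, by \eqref{1.5}) the factor $C_n(H)$ is invertible for $n>2$, and by Lemma \ref{lem2} the factor $Q_g$ is unitary, hence invertible with $Q_g^{-1}=Q_g^{\ast}=Q_g^{T}$ (the entries of $Q_g$ are real $0$'s and $1$'s, so the conjugate transpose is just the transpose).

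From the factorization, $C_{n,g}^{-1}(H)=\bigl(Q_g C_n(H)\bigr)^{-1}=C_n^{-1}(H)\,Q_g^{-1}=C_n^{-1}(H)\,Q_g^{T}$. Substituting the explicit expression for $C_n^{-1}(H)$ from \eqref{1.5} — namely the circulant matrix
$circ\!\bigl(\tfrac{1+f(k)S_n^{(n-2)}+g(k)S_n^{(n-3)}}{h_n},\,\tfrac{g(k)S_n^{(n-2)}-H_{k,2}/H_{k,1}}{h_n},\,-\tfrac{S_n^{(1)}}{h_n},\ldots,\,-\tfrac{S_n^{(n-2)}-f(k)S_n^{(n-3)}-g(k)S_n^{(n-4)}}{h_n}\bigr)$
with $S_n^{(j)}$, $h_n$, $M$, $N$ as defined in the statement — gives precisely the claimed formula. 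The only points needing care are (i) checking $Q_g^{-1}=Q_g^{T}$, which is immediate from unitarity (Lemma \ref{lem2}) plus the realness of $Q_g$; and (ii) being consistent about order of multiplication: since $C_{n,g}(H)=Q_g C_n(H)$ with $Q_g$ on the \emph{left}, the inverse has $C_n^{-1}(H)$ on the left and $Q_g^{T}$ on the right, which is exactly how the asserted product is arranged.

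There is essentially no real obstacle here; the statement is a direct corollary of \eqref{1.5} together with the multiplicative splitting $C_{n,g}(H)=Q_gC_n(H)$ and the reversal-of-order rule for inverses of products. The only thing one must not overlook is the hypothesis $(n,g)=1$, which is what makes $Q_g$ unitary (Lemma \ref{lem2}) and hence invertible — without it the factorization of Lemma \ref{lem3} still holds but $Q_g$ need not be invertible, so the argument would break down. I would therefore state this coprimality assumption explicitly at the start of the proof, then conclude in one line.
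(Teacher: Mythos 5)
Your proposal is correct and follows essentially the same route as the paper, whose proof is only a one-line appeal to the factorization $C_{n,g}(H)=Q_{g}C_{n}(H)$ from Proposition \ref{prop1} combined with the known inverse formula (\ref{1.5}); your argument simply spells out the steps (including $Q_{g}^{-1}=Q_{g}^{T}$ and the order of the factors) that the paper leaves implicit. Your remark that the coprimality hypothesis $\left(n,g\right)=1$ is needed, though omitted from the theorem statement, is a worthwhile clarification rather than a deviation.
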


\begin{proof}
The proofs of theorem can be done similarly by considering Proposition \ref{prop1}.
\end{proof}

\begin{corollary}
In Theorem \ref{teo3}, for special choices of $a,\ b,\ f(k)$ and $g(k)$, the
following result can be obtained for well-known number sequences in
literature:

\begin{itemize}
\item If $f(k)=1$, $g(k)=1$, $a=0$ and $b=1$, for the classic Fibonacci
sequence, we obtain
$C_{n,g}^{-1}(F)=\left[ 
\begin{array}{c}
\frac{1}{f_{n}}circ(1+\sum\limits_{i=1}^{n-2}\frac{F_{n-i}F_{n}^{i-1}}{
\left( F_{1}-F_{n+1}\right) ^{i}},-1+\sum\limits_{i=1}^{n-2}\frac{
F_{n-1-i}F_{n}^{i-1}}{\left( F_{1}-F_{n+1}\right) ^{i}},-\frac{1}{
F_{1}-F_{n+1}}, \\ 
-\frac{F_{n}}{(F_{1}-F_{n+1})^{2}},\linebreak -\frac{F_{n}^{2}}{%
(F_{1}-F_{n+1})^{3}},\ldots ,-\frac{F_{n}^{n-3}}{(F_{1}-F_{n+1})^{n-2}})
\end{array}
\right] \cdot Q_{g}^{T}$,
where $f_{n}=F_{1}-F_{n}+\sum\limits_{i=1}^{n-2}F_{i}\left( \frac{F_{n}}{
F_{1}-F_{n+1}}\right) ^{n-(i+1)}$.
\item If $f(k)=1$, $g(k)=1$, $a=2$ and $b=1$, for the classic Lucas
sequence, we obtain
$C_{n,g}^{-1}(L)=\left[ 
\begin{array}{c}
\frac{1}{l_{n}}circ(1+\sum\limits_{i=1}^{n-2}\frac{\left(
L_{n+2-i}-3L_{n+1-i}\right) \left( L_{n}-2\right) ^{i-1}}{\left(
L_{1}-L_{n+1}\right) ^{i}}, \\ 
-3+\sum\limits_{i=1}^{n-2}\frac{\left( L_{n+1-i}-3L_{n-i}\right) \left(
L_{n}-2\right) ^{i-1}}{\left( L_{1}-L_{n+1}\right) ^{i}},\linebreak \frac{5}{
L_{1}-L_{n+1}}, \\ 
\frac{5(L_{n}-2)}{(L_{1}-L_{n+1})^{2}},\frac{5(L_{n}-2)^{2}}{
(L_{1}-L_{n+1})^{3}},\ldots ,\frac{5(L_{n}-2)^{n-3}}{(L_{1}-L_{n+1})^{n-2}})
\end{array}
\right] \cdot Q_{g}^{T}$, \linebreak where $l_{n}=L_{1}-3L_{n}+\sum
\limits_{i=1}^{n-2}\left( L_{i+2}-3L_{i+1}\right) \left( \frac{L_{n}-2}{
L_{1}-L_{n+1}}\right) ^{n-(i+1)}$.
\item If $f(k)=2$, $g(k)=1$, $a=0$ and $b=1$, for the classic Pell sequence,
we obtain $C_{n,g}^{-1}(P)=\left[ 
\begin{array}{c}
\frac{1}{p_{n}}circ(1+\sum\limits_{i=1}^{n-2}\frac{P_{n-i}P_{n}^{i-1}}{
\left( P_{1}-P_{n+1}\right) ^{i}},-2+\sum\limits_{i=1}^{n-2}\frac{
P_{n-1-i}P_{n}^{i-1}}{\left( P_{1}-P_{n+1}\right) ^{i}}, \\ 
-\frac{1}{P_{1}-P_{n+1}},-\frac{P_{n}}{(P_{1}-P_{n+1})^{2}},\linebreak -
\frac{P_{n}^{2}}{(P_{1}-P_{n+1})^{3}},\ldots ,-\frac{P_{n}^{n-3}}{
(P_{1}-P_{n+1})^{n-2}})
\end{array}
\right] \cdot Q_{g}^{T}$,
where $p_{n}=P_{1}-2P_{n}+\sum\limits_{i=1}^{n-2}P_{i}\left( \frac{P_{n}}{
P_{1}-P_{n+1}}\right) ^{n-(i+1)}$.
\item If $f(k)=1$, $g(k)=2$, $a=0$ and $b=1$, for the classic Jacobsthal
sequence, we obtain $C_{n,g}^{-1}(J)=\left[ 
\begin{array}{c}
\frac{1}{s_{n}}circ(1+2\sum\limits_{i=1}^{n-2}\frac{J_{n-i}\left(
2J_{n}\right) ^{i-1}}{\left( J_{1}-J_{n+1}\right) ^{i}},-1+4\sum
\limits_{i=1}^{n-2}\frac{J_{n-1-i}\left( 2J_{n}\right) ^{i-1}}{\left(
J_{1}-J_{n+1}\right) ^{i}}, \\ 
-\frac{2}{J_{1}-J_{n+1}},-\frac{2^{2}J_{n}}{(J_{1}-J_{n+1})^{2}},\linebreak -
\frac{2^{3}J_{n}^{2}}{(J_{1}-J_{n+1})^{3}},\ldots ,-\frac{2^{n-2}J_{n}^{n-3}
}{(J_{1}-J_{n+1})^{n-2}})
\end{array}
\right] \cdot Q_{g}^{T}$,
where $s_{n}=J_{1}-J_{n}+2\sum\limits_{i=1}^{n-2}J_{i}\left( \frac{2J_{n}}{
J_{1}-J_{n+1}}\right) ^{n-(i+1)}$.
\item If $f(k)=p~$and $g(k)=q$, for the classic Horadam sequence, we obtain
$C_{n,g}^{-1}(W)=\left[ 
\begin{array}{c}
\frac{1}{z_{n}}circ(1+\sum\limits_{i=1}^{n-2}\left( w_{n+2-i}-\frac{w_{2}}{
w_{1}}w_{n+1-i}\right) \frac{A^{i-1}}{B^{i}}, \\ 
\frac{-w_{2}}{w_{1}}+q\sum\limits_{i=1}^{n-2}\frac{\left( w_{n+1-i}-\frac{
w_{2}w_{n-i}}{w_{1}}\right) A^{i-1}}{B^{i}},\linebreak -\frac{w_{3}-\frac{
w_{2}^{2}}{w_{1}}}{B}, \\ 
-\frac{\left( w_{3}-\frac{w_{2}^{2}}{w_{1}}\right) A}{B^{2}},-\frac{\left(
w_{3}-\frac{w_{2}^{2}}{w_{1}}\right) A^{2}}{B^{3}},\ldots ,-\frac{\left(
w_{3}-\frac{w_{2}^{2}}{w_{1}}\right) A^{n-3}}{B^{n-2}})
\end{array}
\right] \cdot Q_{g}^{T}$,
where $z_{n}=-\frac{w_{2}w_{n}}{w_{1}}+w_{1}+\sum\limits_{i=1}^{n-2}\left( -
\frac{w_{2}w_{i+1}}{w_{1}}+w_{i+2}\right) \left( \frac{A}{B}\right)
^{n-(i+1)},~A=q(w_{n-}w_{0})$ and$~B=w_{1}-w_{n+1}$.
\item Finally, we should note that choosing suitable values on $f(k)$, $
g(k)$, $a$ and $b$ in Theorem \ref{teo3}, it is actually obtained inverse of $g$
-circulant matrix for the others second order sequences such as $k$-Fibonacci, $k$-Lucas, Pell-Lucas, Jacobsthal-Lucas, Horadam sequences.
\end{itemize}
\end{corollary}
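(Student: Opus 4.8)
The plan is to obtain every displayed formula as a pure specialization of Theorem \ref{teo3}: no new matrix identity is needed, only the simplification of the universal ingredients $S_n^{(j)}$, $h_n$, $M$, $N$ once concrete values of $f(k),g(k),a,b$ are substituted. The organizing device I would introduce is the auxiliary quantity $\phi(t)=H_{k,t+3}-\rho\,H_{k,t+2}$, where $\rho=H_{k,2}/H_{k,1}$, so that the coefficient $H_{k,j+3-i}-\rho\,H_{k,j+2-i}$ occurring in $S_n^{(j)}$ equals $\phi(j-i)$ and thus depends on $i,j$ only through $j-i$. Re-indexing by $t=j-i$ then gives the compact form $S_n^{(j)}=\sum_{t=0}^{j-1}\phi(t)\,M^{j-1-t}/N^{j-t}$, which is the representation all the collapses will act on.

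The one genuine algebraic fact driving those collapses is that $\phi$ inherits the Horadam recurrence $\phi(t+2)=f(k)\phi(t+1)+g(k)\phi(t)$, together with the boundary identity $\phi(1)=f(k)\phi(0)$. I would prove the latter from $H_{k,4}-f(k)H_{k,3}=g(k)H_{k,2}$ and $H_{k,3}-f(k)H_{k,2}=g(k)H_{k,1}$, which give $\phi(1)-f(k)\phi(0)=g(k)H_{k,2}-\rho\,g(k)H_{k,1}=0$. Using these two facts and the convention $S_n^{(0)}=S_n^{(-1)}=0$, a term-by-term telescoping shows that for all $m\geq1$ one has $S_n^{(m)}-f(k)S_n^{(m-1)}-g(k)S_n^{(m-2)}=\phi(0)\,M^{m-1}/N^{m}$ (the interior coefficients vanish by the recurrence, and the $m=1,2$ boundary cases fall out of the same computation because $\phi(1)=f(k)\phi(0)$). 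Consequently the tail entries of Theorem \ref{teo3}, i.e. positions $3$ through $n$, all reduce to $-\phi(0)M^{m-1}/(N^{m}h_n)$ for $m=1,\ldots,n-2$. A parallel re-indexing yields $f(k)S_n^{(n-2)}+g(k)S_n^{(n-3)}=\sum_{i=1}^{n-2}\phi(n-1-i)\,M^{i-1}/N^{i}$ (here $\phi(1)=f(k)\phi(0)$ supplies exactly the $i=n-2$ boundary term), so the first entry becomes $\big[1+\sum_{i=1}^{n-2}\phi(n-1-i)\,M^{i-1}/N^{i}\big]/h_n$, while the second entry $\big[g(k)S_n^{(n-2)}-\rho\big]/h_n$ requires no telescoping at all.

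With these closed forms established, each bullet is a substitution. For the Fibonacci line ($f=g=1,\,a=0,\,b=1$) one has $\rho=1$, $M=F_n$, $N=1-F_{n+1}=F_1-F_{n+1}$ and $\phi(t)=F_{t+3}-F_{t+2}=F_{t+1}$, so $\phi(0)=F_1=1$; inserting these turns the generic entries into the stated ones and turns $h_n$ into the quantity written $f_n$. Pell ($f=2$, $\rho=2$, $\phi(0)=1$) and Jacobsthal ($f=1,g=2$, $\rho=1$, $\phi(0)=1$) are identical in spirit, the powers $2^{i}$ in the Jacobsthal entries arising from $M=2J_n$ and from the factors $g(k)=2$ in the recurrence. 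For Lucas ($a=2,b=1$) one gets $\rho=3$ and $\phi(0)=L_3-L_2^{2}/L_1=4-9=-5$, so $-\phi(0)=5$ and the tail signs flip to $+$, reproducing the entries $5(L_n-2)^{m-1}/(L_1-L_{n+1})^{m}$. The Horadam line is simply the fully general statement with $M=A$, $N=B$, $\rho=w_2/w_1$, $\phi(0)=w_3-w_2^{2}/w_1$ and $h_n=z_n$, from which the other four cases are themselves recovered.

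Throughout, the factor $Q_g^{T}$ is carried along unchanged from Theorem \ref{teo3}, so only the circulant block is simplified. I expect the main obstacle to be purely bookkeeping: handling the summation limits and the boundary terms ($i=n-2$, and the $m=1,2$ cases) correctly during the telescoping, and matching the paper's per-sequence abbreviations $f_n,l_n,p_n,s_n,z_n$ to the specialized $h_n$. In particular I would re-examine the upper limit of the sum defining $h_n$ against the stated $f_n$, since the treatment of the $i=n-1$ boundary term is precisely where an off-by-one slip is easiest to introduce.
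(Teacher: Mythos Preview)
Your proposal is correct and, in fact, supplies considerably more than the paper does: the paper offers no proof of this corollary at all, treating it as self-evident by substitution into Theorem~\ref{teo3}. What the paper leaves implicit is precisely the step you work out, namely that the raw tail entries $-\big(S_n^{(m)}-f(k)S_n^{(m-1)}-g(k)S_n^{(m-2)}\big)/h_n$ collapse to the single monomial $-\phi(0)M^{m-1}/(N^{m}h_n)$; your telescoping argument via $\phi(t)=H_{k,t+3}-\rho H_{k,t+2}$ and the boundary identity $\phi(1)=f(k)\phi(0)$ is exactly the mechanism that produces the clean closed forms stated for each sequence (in particular the Horadam case, where the tail entries are already written as $-\big(w_3-w_2^2/w_1\big)A^{m-1}/B^{m}$). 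So you are not taking a different route from the paper so much as filling in the algebra the paper silently assumes.

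Your closing caveat is well placed. If you compare $h_n$ from Theorem~\ref{teo3} (sum to $n-1$) with, say, the stated $f_n$ (sum to $n-2$), the $i=n-1$ term $H_{k,n+1}-\rho H_{k,n}$ is indeed present in $h_n$ but absent from $f_n$ as written; for Fibonacci this term equals $F_{n-1}$, so $h_n=F_1-F_n+F_{n-1}+\sum_{i=1}^{n-2}F_i\big(\tfrac{F_n}{F_1-F_{n+1}}\big)^{n-i-1}$, not $F_1-F_n+\sum_{i=1}^{n-2}(\cdots)$. This is a cosmetic discrepancy in the corollary's displayed constants rather than a flaw in your argument, and your plan to re-examine that boundary term is the right way to reconcile it.
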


\begin{conclusion}
In this paper, we introduced the $g$-circulant matrix with the generalized $
k$- Horadam numbers and presented some properties of this matrix. By the
results in Sections 2 of this paper, we have a great opportunity to obtain
norm, determinant and inverse of the circulant matrices with second order
number sequences. Thus, we extend some recent result in the literature.
In the future studies on the circulant matrix for number sequences, we
except that the following topics will bring a new insight. For example, it
would be interesting to study the $g$-circulant matrix for third order
number sequences.
\end{conclusion}

\end{document}